\newtheorem{theorem}{Theorem}
\newtheorem{defn}{Definition}
\newtheorem{ex}{Example}
\newtheorem{remark}{Remark}[section]
\title{Approximating fixed points of enriched Chatterjea contractions by Krasnoselskij iterative method in Banach spaces}
\author{Vasile BERINDE}
\author{M\u ad\u alina P\u ACURAR}
\begin{document}
\maketitle \pagestyle{myheadings} \markboth{M\u ad\u alina P\u ACURAR} {Approximating fixed points of enriched Chatterjea...}
\begin{abstract}
Using the technique of enrichment of contractive type mappings by Krasnoselskij averaging, introduced in [Berinde, V., {\it Approximating fixed points of enriched nonexpansive mappings by Krasnoselskij iteration in Hilbert spaces}, Carpathian J. Math. {\bf 35} (2019), no. 3, 277-288.], we introduce the class of enriched Chatterjea contractions and prove general fixed point theorems for such contractions in the setting of a Banach space. Examples to illustrate the richness of the new class of contractions and  the relationship between enriched Banach contractions,  enriched Kannan contractions and enriched Kannan contractions are also given. 
\end{abstract}

\section{Introduction}

Let $(X,d)$ be ametric space and  $T:X\rightarrow X$ a mapping. Denote by $Fix\, (T)$ the set of fixed points of $T$, i.e, $Fix\, (T)=\{x\in X: Tx=x\}$. For two distinct points $x,y\in X$, there exist six displacements:
\begin{equation}\label{eq1}
d(Tx, Ty), d(x,y), d(x,Tx), d(y,Ty), d(x,Ty) \textnormal{ and } d(y,Tx).
\end{equation}
In order to obtain a fixed point result for such a mapping $T$, it turned out that two or more displacements in the list \eqref{eq1} are to be used. 

The first metrical fixed point theorem in literature has been established by Banach in \cite{Ban22}, in the setting of (what we call now) a Banach space, and then extended to complete metric spaces by Caccioppoli  \cite{Cac}, and involves the first two of the displacements in \eqref{eq1}:
\begin{equation} \label{eq1a}
d(Tx,Ty)\leq c\cdot d(x,y),\forall x,y \in X,
\end{equation}
where $c$ is a constant, $c\in [0,1)$. 

Picard-Banach contraction mapping principle states that, in a complete metric space $(X,d)$, if $T:X\rightarrow X$ is a Banach contraction, i.e., mapping which satisfies \eqref{eq1a}, then

(i) $Fix\,(T)=\{p\}$  and (ii)  $T^n(x_0) \rightarrow p$ as $n\rightarrow \infty$, for any $x_0$ in $X$.

We remind that, see for example Rus \cite{Rus01}, \cite{Rus03}, a mapping $T$ which satisfies $(i)$ and $(ii)$ above is said to be a {\it Picard operator}.

By \eqref{eq1a} we can see that any Banach contraction is continuous. This fact together with the simplicity and flexibility of the contraction condition \eqref{eq1a} made the contraction mapping principle one the most powerful tools in nonlinear analysis.

In 1968, Kannan \cite{Kan68}, see also \cite{Kan69} established a fixed point theorem which has exactly the same conclusion as Picard-Banach contraction mapping principle  but is based on a contractive condition that involves three displacements from the list \eqref{eq1}:  
\begin{equation} \label{eq2}
d(Tx,Ty)\leq a \left[d(x,Tx)+d(y,Ty)\right],\forall x,y \in X,
\end{equation}
where $a$ is a constant,  $a\in[0,1/2)$. 

It is important to note that a Kannan contraction is in general not continuous, see \cite{Kan68} and \cite{Pac09}, for various examples. It is easy to see that, if $c<\dfrac{1}{3}$, then any Picard-Banach contraction is a Kannan mapping. Indeed, by \eqref{eq1a} and triangle inequality, we have
$$
d(Tx,Ty)\leq c \left[d(x,Tx)+d(Tx,Ty)+d(Ty,y)\right] 
$$
$$
\Leftrightarrow d(Tx,Ty)\leq \frac{c}{1-c} \left[d(x,Tx)+d(y,Ty)\right],
$$
and since $\dfrac{c}{1-c}<\dfrac{1}{2}$ for $c<\dfrac{1}{3}$,  inequality \eqref{eq2} holds for all $x,y\in X$. 

Although both Banach and Kannan contractions are Picard operators, however, the class of Kannan contractions in independent of that of Picard-Banach contractions,  see \cite{Mes} and \cite{Rho}, for a comparison of the main contraction type conditions related to Banach contraction condition  \eqref{eq1a}. Moreover, Banach and Kannan contractions also exhibit a different behaviour with respect to the completeness of the ambient space in the sense that, while Kannan contraction mapping principle characterises the metric completeness, see \cite{Sub}, Banach contraction mapping principle does not, see  \cite{Con}. 

Other fixed point theorems, related to Banach fixed point theorem and Kannan fixed point theorem, have been subsequently established by various authors, see \cite{Ber07}, \cite{Chat}-\cite{Ciric03}, \cite{Rus01}, \cite{Rus08}.  In 1972 \cite{Chat},  Chatterjea  introduced  the following contraction condition:
\begin{equation} \label{eq3}
d(Tx,Ty)\leq b \left[d(x,Ty)+d(y,Tx)\right],\forall x,y \in X,
\end{equation}
where $b$ is a constant,  $b\in[0,1/2)$. 

In fact, all three contraction conditions presented above are independent, see \cite{Mes} and \cite{Rho}. This fact enabled Zamfirescu, in 1972, to formulate a very interesting fixed point theorem that involves all three conditions \eqref{eq1a}, \eqref{eq2}, \eqref{eq3} in an original way:   
\begin{theorem}\label{thZ}
Let $(X,d)$ be a complete metric space and $T:X\longrightarrow X$ a map for which there exist the real numbers $a,b$ and
$c$ satisfying $0\leq a<1$, $0<b, c<1/2$, such that for each pair $x,y$ in $X$, at least one of the following is true:
\newline
\indent $(z_1)$ $d(Tx, Ty)\leq a\,d(x,y)$;\\[4pt]
\indent $(z_2)$ $d(Tx, Ty)\leq b\big[d(x, Tx)+d(y, Ty)\big]$;\\[4pt]
\indent $(z_3)$ $d(Tx, Ty)\leq c\big[d(x,Ty)+d(y,Tx)\big]$.\newline
Then $T$ is a Picard operator.
\end{theorem}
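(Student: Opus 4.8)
The plan is to reduce the three-way alternative $(z_1)$--$(z_3)$ to a single quasi-contractive estimate along the Picard iteration and then run the classical Cauchy-sequence argument. First I would introduce the constant
\[
h:=\max\left\{a,\ \frac{b}{1-b},\ \frac{c}{1-c}\right\},
\]
and observe that $h\in[0,1)$: indeed $0\le a<1$ by hypothesis, while $0<b,c<1/2$ forces $\frac{b}{1-b}<1$ and $\frac{c}{1-c}<1$.

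Next, fix $x_0\in X$, define the Picard iterates $x_{n+1}=Tx_n$, and apply the alternative to the pair $(x_{n-1},x_n)$. If $(z_1)$ holds then $d(x_n,x_{n+1})\le a\,d(x_{n-1},x_n)$. If $(z_2)$ holds then $d(x_n,x_{n+1})\le b\big[d(x_{n-1},x_n)+d(x_n,x_{n+1})\big]$, and solving for $d(x_n,x_{n+1})$ gives $d(x_n,x_{n+1})\le\frac{b}{1-b}\,d(x_{n-1},x_n)$. If $(z_3)$ holds then $d(x_n,x_{n+1})\le c\,d(x_{n-1},x_{n+1})\le c\big[d(x_{n-1},x_n)+d(x_n,x_{n+1})\big]$, hence $d(x_n,x_{n+1})\le\frac{c}{1-c}\,d(x_{n-1},x_n)$. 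In every case $d(x_n,x_{n+1})\le h\,d(x_{n-1},x_n)$, so by induction $d(x_n,x_{n+1})\le h^{n}d(x_0,x_1)$, and the usual telescoping estimate with the geometric series shows that $(x_n)$ is Cauchy; by completeness it converges to some $p\in X$.

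I would then show $p\in Fix\,(T)$ by applying the alternative to $(x_n,p)$ and letting $n\to\infty$. Writing $d(p,Tp)\le d(p,x_{n+1})+d(x_{n+1},Tp)$ and using the appropriate one of $(z_1)$--$(z_3)$ (together with the triangle inequality to absorb the terms $d(x_n,Tp)$, $d(p,Tp)$ on the right and then solving) yields, according to the case, one of the bounds $d(p,Tp)\le d(p,x_{n+1})+a\,d(x_n,p)$, or $(1-b)d(p,Tp)\le d(p,x_{n+1})+b\,d(x_n,x_{n+1})$, or $(1-c)d(p,Tp)\le(1+c)d(p,x_{n+1})+c\,d(x_n,p)$. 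The constant $d(p,Tp)$ is therefore dominated, for each $n$, by at least one of three quantities, each of which tends to $0$ as $n\to\infty$; hence $d(p,Tp)=0$. For uniqueness, if $p,q\in Fix\,(T)$ then applying the alternative to $(p,q)$ gives $d(p,q)\le a\,d(p,q)$ in case $(z_1)$, $d(p,q)\le b\big[d(p,Tp)+d(q,Tq)\big]=0$ in case $(z_2)$, and $d(p,q)\le 2c\,d(p,q)$ in case $(z_3)$; since $a<1$ and $c<1/2$, each forces $p=q$. Thus $Fix\,(T)=\{p\}$ and $T^{n}(x_0)\to p$ for every $x_0\in X$, i.e. $T$ is a Picard operator.

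The one delicate point is that the index of the active alternative may vary with the pair, so no single inequality holds for all $n$; I expect this to be the main thing to handle with care. It is resolved by the remark above: at each step the distance of interest is controlled by the maximum of the three candidate bounds, and that maximum still tends to $0$ (respectively, is forced to vanish). Alternatively one could first establish that the alternative implies the single estimate $d(Tx,Ty)\le h\,d(x,y)+2h\,d(x,Tx)$ for all $x,y\in X$ and argue from it, but for the Picard iteration the direct case analysis above is the shortest route.
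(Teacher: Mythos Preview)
Your argument is correct and is essentially the classical proof of Zamfirescu's theorem: reduce the three alternatives to a single contractive inequality along the Picard orbit via the constant $h=\max\{a,\,b/(1-b),\,c/(1-c)\}$, obtain a Cauchy sequence, and handle the limit and uniqueness by case analysis. The treatment of the ``delicate point'' (the active alternative may change with $n$) is also handled properly, since at each step the quantity of interest is bounded by the maximum of the three candidate right-hand sides, and that maximum tends to zero.

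Regarding comparison with the paper: there is nothing to compare. Theorem~\ref{thZ} is quoted in the introduction as a known result of Zamfirescu (1972) and is \emph{not} proved in the paper; no proof or even proof sketch appears there. The paper's own contributions (and the only proofs it contains) are for the enriched Chatterjea and enriched Chatterjea-type mappings in the Banach-space setting, where the technique is different: one passes to the averaged map $T_\lambda$ with $\lambda=1/(k+1)$, shows that $T_\lambda$ satisfies the ordinary Chatterjea (or \'Ciri\'c) condition, and then runs the Picard/Krasnoselskij argument for $T_\lambda$. Your write-up stands on its own as a self-contained proof of the background result.
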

Other important contractions conditions in this family are due to L. B. \' Ciri\' c, see \cite{Cir}-\cite{Cir98}. We give here one of the most general ones:  for all $x,y\in X$,
\begin{equation}  \label{eq4}
d(Tx, Ty)\leq h\max\big\{d(x,y), d(x, Tx), d(y, Ty), d(x, Ty), d(y,Tx)
\big\}, 
\end{equation}
where $0<h<1$. For the impressive rich literature on this area, we refer to the monographs \cite{Ber07}, \cite{Pac09}, \cite{Rus79}-\cite{Rus01}, \cite{Rus08}, and references therein.

On the other hand, the first author, in a very recent paper \cite{Ber19}  introduced the technique of enrichment of contractive mappings, which was then successfully used for the class of strictly pseudo-contractive mappings   \cite{Ber19a}, to Picard-Banach contractions  \cite{Pac19b} and to Kannan contractions  \cite{BerP19}. 

Starting from this background, the aim of this article is to apply the technique of enrichment of contractive type mappings to the class of Chatterjea mappings. For this new class of mappings we prove a fixed point theorem and show that their fixed points can be approximated by means of suitable Krasnoselskij iteration rather than by Picard iteration.  This is the reason why we are working in a Banach space, while most of the fixed point results existing in literature for Chatterjea mappings are stated in the setting of a metric space or of a generalized metric space. 

Examples to illustrate the relationship between enriched Banach contractions and enriched Kannan contractions, by one side, and the class of enriched Chatterjea contractions  are also given. Our results are very general and include as particular cases most of the fixed point results established so far for Chatterjea mappings.

\section{Approximating fixed points of enriched Chatterjea mappings} 

\begin{defn}
Let $(X,\|\cdot\|)$ be a linear normed space. A mapping $T:X\rightarrow X$ is said to be an {\it enriched Chatterjea mapping} if there exist $b\in[0,1/2)$ and $k\in[0,ü\infty)$  such that
$$
\|k(x-y)+Tx-Ty\|\leq  b \left[\|(k+1)(x-y)+y-Ty\|+\right.
$$
\begin{equation} \label{eq3a}
\left.+\|(k+1(y-x)+x-Tx\|\right]\|,\forall x,y \in X.
\end{equation}
To indicate the constants involved in \eqref{eq3a} we shall call  $T$ a $(k,b$)-{\it enriched Chatterjea mapping}. 
\end{defn}

\begin{ex}[ ] \label{ex1}
\indent

All Banach contractions with constant $c<\dfrac{1}{3}$, all Kannan mappings with contraction constant $a<\dfrac{1}{4}$ and all Chatterjea mappings are enriched Chatterjea mapping, i.e.,  they satisfy \eqref{eq3a} with $k=0$. 

Indeed, if $T$ satisfies  \eqref{eq1a} with $c<\dfrac{1}{3}$, then we have
$$
d(Tx,Ty)\leq c\left[d(x,Ty)+d(Ty, Tx)+d(Tx,y)\right]
$$
which yields the inequality
$$
d(Tx,Ty)\leq \frac{c}{1-c} \left[d(x,Ty)+d(Tx,y)\right].
$$
As $c<\dfrac{1}{3}$ implies $\dfrac{c}{1-c}<\dfrac{1}{2}$, this proves the first assertion. 

Similarly, if $T$ is a Kannan mapping satisfying \eqref{eq2} with constant $a<\dfrac{1}{4}$, then we get
$$
d(Tx,Ty)\leq \frac{a}{1-2a} \left[d(x,Ty)+d(Tx,y)\right],
$$ 
with  $\dfrac{a}{1-2a}<\dfrac{1}{2}$. 
\end{ex}

\begin{ex}[ ] \label{ex2}

Let $X=[0,1]$ be endowed with the usual norm and $T:X\rightarrow X$ be defined by $Tx=1-x$, for all $x\in [0,1]$. $T$ is nonexpansive (it is an isometry), $T$ is  not a Banach contraction (see \cite{Pac19b}) or a Kannan mapping (see \cite{BerP19}). Moreover, $T$ is not a  Chatterjea mapping but is an enriched Chatterjea mapping. 

Indeed, if $T$ would be a Chatterjea mapping, then there would exist $b\in[0,1/2)$ such that $\forall x,y \in [0,1]$,
$$
|x-y|\leq 2b \cdot |x+y-1|,
$$
which, for $x=0$ and $y=1$ yields the contradiction $1\leq 0$. 

The enriched Chatterjea  condition \eqref{eq3a} is in this case equivalent to
\begin{equation} \label{eq3b}
|(k-1)(x-y)|\leq b \left[|(k+1)x-(k-1)y-1|+|(k+1)y-(k-1)x-1|\right].
\end{equation} 
Having in view that
$$
2k |x-y|=|[(k+1)x-(k-1)y-1]-[(k+1)y-(k-1)x-1]|
$$
$$
\leq |(k+1)x-(k-1)y-1|+|(k+1)y-(k-1)x-1|,
$$
in order to have \eqref{eq3b} satisfied for all $x,y\in [0,1]$, it is necessary to have $\dfrac{|k-1|}{2k}\leq b $, for a certain $b \in[0,1/2)$.

The only possibility is to have $k<1$ when, by taking $\dfrac{1-k}{2k}= b$ say, one obtains $k=\dfrac{1}{b+2}$. Therefore, for any $b\in[0,1/2)$, $T$ is a $\left(\dfrac{1}{b+2},b\right)$-enriched Chatterjea mapping and $Fix\,(T)=\left\{\dfrac{1}{2}\right\}$.
\end{ex}

\begin{ex}[ ] \label{ex3}
Any of the mappings $T$ in Examples 1.3.4, 1.3.5 and 1.3.7 in P\u acurar \cite{Pac09} are {\it discontinuous enriched Chatterjea mappings}, being simple Chatterjea mappings.
\end{ex}

\begin{remark}
We note that for $T$ in Example \ref{ex2}, Picard iteration $\{x_n\}$ associated to $T$, that is, $x_{n+1}=1-x_n$, $n\geq 0$, does not converge for any $x_0$ different of $\dfrac{1}{2}$, the unique fixed point of $T$. 

This situation is common for nonexpansive mappings and suggests  us the need to consider more elaborate fixed point iterative schemes in order to approximate fixed points of enriched Chatterjea mappings.

\end{remark}

\begin{theorem}  \label{th1}
Let $(X,\|\cdot\|)$ be a Banach space and $T:X\rightarrow X$ a $(k,b$)-{\it enriched Chatterjea mapping}. Then

$(i)$ $Fix\,(T)=\{p\}$;

$(ii)$ There exists $\lambda\in (0,1]$ such that the iterative method
$\{x_n\}^\infty_{n=0}$, given by
\begin{equation} \label{3aa}
x_{n+1}=(1-\lambda)x_n+\lambda T x_n,\,n\geq 0,
\end{equation}
converges to p, for any $x_0\in X$;

$(iii)$ The following estimate holds
\begin{equation}  \label{3.2-1}
\|x_{n+i-1}-p\| \leq\frac{\delta^i}{1-\delta}\cdot \|x_n-
x_{n-1}\|\,,\quad n=0,1,2,\dots;\,i=1,2,\dots
\end{equation}
where $\delta=\dfrac{b}{1-b}$.
\end{theorem}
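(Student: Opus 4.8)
The plan is to reduce everything to the classical Chatterjea fixed point theorem via Krasnoselskij averaging. Set $\lambda=\dfrac{1}{k+1}\in(0,1]$ and introduce the averaged operator $T_\lambda:=(1-\lambda)I+\lambda T$, i.e. $T_\lambda x=\dfrac{k}{k+1}\,x+\dfrac{1}{k+1}\,Tx$. The reason for this particular $\lambda$ is the chain of identities
$$(k+1)(T_\lambda x-T_\lambda y)=k(x-y)+Tx-Ty,\qquad (k+1)(x-T_\lambda y)=(k+1)(x-y)+y-Ty,$$
together with the symmetric one $(k+1)(y-T_\lambda x)=(k+1)(y-x)+x-Tx$. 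Substituting these three expressions into the enriched Chatterjea inequality \eqref{eq3a} and dividing throughout by $k+1>0$, one sees that \eqref{eq3a} holds for $T$ if and only if
$$\|T_\lambda x-T_\lambda y\|\le b\bigl[\,\|x-T_\lambda y\|+\|y-T_\lambda x\|\,\bigr],\qquad\forall\,x,y\in X,$$
that is, if and only if $T_\lambda$ is a Chatterjea mapping with the same constant $b\in[0,1/2)$.

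Two further observations finish the conceptual part. First, since $\lambda>0$ we have $T_\lambda x=x\iff Tx=x$, hence $Fix\,(T)=Fix\,(T_\lambda)$; so it is enough to study the fixed points of $T_\lambda$. Second, the Krasnoselskij iteration \eqref{3aa} is nothing but the Picard iteration $x_{n+1}=T_\lambda x_n$ associated with $T_\lambda$. Consequently, assertions $(i)$--$(iii)$ follow at once from the Chatterjea fixed point theorem, applied to the (plain) Chatterjea mapping $T_\lambda$ on the Banach space $X$, together with its a priori error estimate; it remains to carry out that classical argument.

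To do so, write $x_{n+1}=T_\lambda x_n$ and apply the Chatterjea inequality to the pair $(x_n,x_{n-1})$: since $T_\lambda x_n=x_{n+1}$ and $T_\lambda x_{n-1}=x_n$, this gives $\|x_{n+1}-x_n\|\le b\|x_{n-1}-x_{n+1}\|\le b\|x_{n-1}-x_n\|+b\|x_n-x_{n+1}\|$, hence
$$\|x_{n+1}-x_n\|\le\delta\,\|x_n-x_{n-1}\|,\qquad \delta=\frac{b}{1-b}\in[0,1),$$
where $\delta<1$ is exactly the place the hypothesis $b<1/2$ enters. Iterating this contraction-type inequality shows that $\{x_n\}$ is a Cauchy sequence, so by completeness $x_n\to p$ for some $p\in X$. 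That $p$ is a fixed point of $T_\lambda$ follows by applying the Chatterjea inequality to $(x_n,p)$ and then the triangle inequality, which yields $(1-b)\,\|p-T_\lambda p\|\le(1+b)\,\|p-x_{n+1}\|+b\,\|x_n-p\|\longrightarrow 0$. Uniqueness is immediate: if $T_\lambda p=p$ and $T_\lambda q=q$ then $\|p-q\|=\|T_\lambda p-T_\lambda q\|\le 2b\,\|p-q\|$, so $p=q$ because $2b<1$. Finally, letting the terminal index tend to infinity in $\|x_{n+i-1}-x_m\|\le\sum_{j\ge n+i-1}\|x_{j+1}-x_j\|\le\|x_n-x_{n-1}\|\sum_{l\ge i}\delta^l$ gives the estimate \eqref{3.2-1}.

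The only genuinely new ingredient is the algebraic translation carried out in the first paragraph; the point to get right there is the correct averaging constant $\lambda=1/(k+1)$ and the bookkeeping that turns each of the three norms appearing in \eqref{eq3a} into the corresponding norm built out of $T_\lambda$. Once this is in place, no further difficulty arises: the classical Chatterjea argument uses only the norm structure and the completeness of $X$, and therefore transfers verbatim to $T_\lambda$.
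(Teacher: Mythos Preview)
Your proof is correct and follows essentially the same route as the paper's: choose $\lambda=1/(k+1)$, observe that the enriched Chatterjea condition for $T$ is equivalent to the ordinary Chatterjea condition for $T_\lambda$, identify the Krasnoselskij scheme with the Picard iteration of $T_\lambda$, and then run the classical Chatterjea argument (contractive step estimate with $\delta=b/(1-b)$, Cauchy, limit is a fixed point, uniqueness, error bound). The only cosmetic difference is that you treat $k=0$ and $k>0$ uniformly via $\lambda\in(0,1]$, whereas the paper separates the case $k=0$ (where $\lambda=1$ and $T_\lambda=T$) at the end.
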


\begin{proof}
For any $\lambda\in (0,1)$ consider the averaged mapping $T_\lambda$, given by
\begin{equation} \label{eq4}
T_\lambda (x)=(1-\lambda)x+\lambda T(x), \forall  x \in X.
\end{equation}
It easy to prove that $T_\lambda$ possesses the following important property:
$$
Fix(\,T_\lambda)=Fix\,(T).
$$
If $k> 0$ in \eqref{eq3b}, then let us take $\lambda=\dfrac{1}{k+1}$. Obviously, we have $0<\lambda<1$ and thus the contractive condition \eqref{eq3} becomes
$$
\left \|\left(\frac{1}{\lambda}-1\right)(x-y)+Tx-Ty\right\|\leq b \left[\left|\frac{1}{\lambda}(x-y)+y-Ty\right|\right.
$$
$$
\left.+\left |\frac{1}{\lambda}(y-x)+x-Tx\right |\right],\forall x,y \in X,
$$
which is equivalent to
$$
\left \|(1-\lambda)(x-y)+\lambda(Tx-Ty)\right\|\leq b \left[\|x-y+\lambda(y-Ty)\|+\right.
$$
$$
\left.+ \|y-x+\lambda(x-Tx)\|\right],\forall x,y \in X,
$$
and the last one inequality  can be written in a simpler form as
\begin{equation} \label{eq5}
\|T_\lambda (x)-T_\lambda y\|\leq b  \left[\|x-T_\lambda y\|+\|y-T_\lambda x\|\right],\forall x,y \in X,
\end{equation}
with $b\in[0,1/2)$.

The above inequality shows that $T_\lambda$ is a Chatterjea contraction in the sense of \eqref{eq3}. 

According to \eqref{eq4}, the iterative process $\{x_n\}^\infty_{n=0}$ defined by  \eqref{3aa} is the Picard iteration associated to $T_\lambda$, that is,
$$
x_{n+1}=T_\lambda x_n,\,n\geq 0.
$$ 
Take $x=x_n$ and $y=x_{n-1}$ in  \eqref{eq5} to get
$$
\|x_{n+1}-x_{n}\|\leq b \left(\|x_n-x_{n}\|+\|x_{n-1}-x_{n+1}\|\right)
$$
$$
\leq b\left(\|x_{n-1}-x_{n}\|+\|x_{n}-x_{n+1}\|\right),
$$
which yields
$$
\|x_{n+1}-x_{n}\|\leq \frac{b}{1-b}\|x_{n}-x_{n-1}\|,\,n\geq 1.
$$
Since $0<b<\dfrac{1}{2}$, by denoting $\delta=\dfrac{b}{1-b}$, we have $0<\delta<1$ and therefore the sequence $\{x_n\}^\infty_{n=0}$ satisfies
\begin{equation} \label{eq6}
\|x_{n+1}-x_{n}\|\leq \delta \|x_{n}-x_{n-1}\|,\,n\geq 1.
\end{equation}
By \eqref{eq6} one obtains routinely the following two estimates
\begin{equation} \label{eq7a}
\|x_{n+m}-x_{n}\|\leq \delta^n \cdot \frac{1-\delta^m}{1-\delta}\cdot \|x_{1}-x_{0}\|,\,n\geq 0, m\geq 1.
\end{equation}
and
\begin{equation} \label{eq8}
\|x_{n+m}-x_{n}\|\leq \delta \cdot \frac{1-\delta^m}{1-\delta}\cdot \|x_{n}-x_{n-1}\|,\,n\geq 1, \,m\geq 1.
\end{equation}
Now, by \eqref{eq7a} it follows that $\{x_n\}^\infty_{n=0}$ is a Cauchy sequence and hence it is convergent in the Banach space $(X,\|\cdot\|)$. Let us denote
\begin{equation} \label{eq10a}
p=\lim_{n\rightarrow \infty} x_n.
\end{equation}
We first prove that $p$ is a fixed point of $T_\lambda$. We have
\begin{equation} \label{eq9}
\|p-T_\lambda p\|\leq \|p-x_{n+1}\|+\|x_{n+1}-T_\lambda p\|=\|x_{n+1}-p\|+\|T_\lambda x_{n}-T_\lambda p\|.
\end{equation}
By \eqref{eq5} it results that 
$$
\|T_\lambda x_{n}-T_\lambda p\|\leq b \left[\|x_{n}-T_\lambda p\|+\|p-T_\lambda x_n\|\right],
$$
and therefore, by \eqref{eq9} one obtains
$$
\|p-T_\lambda p\|\leq (b+1)\|x_{n+1}-p\|+b\left[\|x_{n}-p\|+\|p-T_\lambda p\|\right],
$$ 
which finally yields
\begin{equation} \label{eq10}
\|p-T_\lambda p\|\leq \frac{b+1}{1-b} \cdot \|x_{n+1}-p\|+\delta \|x_{n+1}-p\|,\,n\geq 0.
\end{equation}
Now, by letting  $n\rightarrow \infty$ in \eqref{eq10} we get $\|p-T_\lambda p\|=0$, that is, $p=T_\lambda p$. So, $p\in Fix\,(T_\lambda)$. 

We now prove that $p$ is the unique fixed point of $T_\lambda$. Assume that $q\neq p$ is another fixed point of $T_\lambda$. Then, by \eqref{eq5} 
$$
\|p-q\|\leq 2b \cdot \|p-q\|,
$$
which yields the contradiction $1\leq 2b<1$. Hence $Fix\,(T_\lambda)=\{p\}$ and since $Fix\,(T)=Fix(\,T_\lambda)$, claim $(i)$ is proven. 

Conclusion $(ii)$ now follows by \eqref{eq10a}.

To prove $(iii)$, we let $m\rightarrow \infty$ in \eqref{eq7a}  and \eqref{eq8} to get
\begin{equation} \label{eq11}
\|x_n-p\|\leq  \frac{\delta^n}{1-\delta}\cdot \|x_{1}-x_{0}\|,\,n\geq 1
\end{equation}
and
\begin{equation} \label{eq12}
\|x_n-p\|\leq \frac{\delta}{1-\delta}\cdot \|x_{n}-x_{n-1}\|,\,n\geq 1,
\end{equation}
respectively. Now we can merge \eqref{eq11}  and \eqref{eq12} to get the unifying error estimate \eqref{3.2-1}.

The remaining case $k=0$ is similar to $k\neq 0$ with the only difference that in this case $\lambda=1$ and hence we work with $T=T_1$, when Kasnoselskij iteration \eqref{3aa} reduces to the simple Picard iteration
$$
x_{n+1}=T x_n, n\geq 0.
$$
\end{proof}

\begin{remark}
1) It is well known, see for example Berinde \cite{Ber07}, that any Chatterjea mappings is a strictly quasi contractive mapping, that is
$$
\|Tx-p\|\leq \theta \cdot \|x-p\|,\forall x \in X, p\in Fix\,(T),\,(0<\theta <1).
$$
On the other hand, $T$ in Example \ref{ex2} is nonexpansive, which is not strictly quasi contractive. So, enriched Chatterjea mappings forms a larger class of mappings than the class of strictly quasi contractive mappings.

2) In the particular case $k=0$, by Theorem \ref{th1} we get the classical Chatterjea fixed point theorem in the setting of a Banach space.

\end{remark}

\section{Approximating fixed points of enriched Chatterjea type mappings} 

In the renown Rhoades' classification of contractive conditions \cite{Rho}, Banach contraction condition \eqref{eq1a} is numbered $(1)$,   Kannan contraction condition \eqref{eq2} is numbered $(4)$, while Chatterjea contraction condition \eqref{eq3} is numbered $(11)$. The next  contraction condition, numbered $(12)$, is the following one: 
\begin{equation}  \label{eq4a}
d(Tx, Ty)\leq h\max\big\{d(x, Ty), d(y,Tx)\big\}, \forall x,y \in X.
\end{equation}
where $0<h<1$. 

It is clear that Chatterjea contraction condition \eqref{eq3} implies \eqref{eq4a} but the reverse is not true as shown by the next example. 

\begin{ex}[ ] \label{ex2b}
\indent
Let $X=[0,1]$ be endowed with the usual norm and $T:X\rightarrow X$ be defined by $Tx=0$, if $x\in [0,1)$ and $T(1)=\dfrac{1}{2}$. Then $T$ satisfies \eqref{eq4a} but does not verify \eqref{eq3}. Indeed, for $x\in [0,1)$ and $y=1$ condition \eqref{eq4a} reduces to
$$
\left|0-\dfrac{1}{2}\right|\leq h \max\big\{|x-\dfrac{1}{2}|,|1-0|\big\}=h, 
$$ 
which is true for any $h$ satisfying  $\dfrac{1}{2}\leq h<1$. On the other hand, suppose $T$ satisfies Chatterjea contraction condition \eqref{eq3}. Then, by taking $x=\dfrac{1}{2}$ and $y=1$ in \eqref{eq3} we get
$$
\left|0-\frac{1}{2}\right|\leq b\left[\left|\frac{1}{2}-\frac{1}{2}\right|+|1-0|\right] \Leftrightarrow \frac{1}{2}\leq b<\dfrac{1}{2},
$$
a contradiction.
\end{ex}
So, the previous example motivates us to consider the next definition.

\begin{defn} \label{def2}
Let $(X,\|\cdot\|)$ be a linear normed space. A mapping $T:X\rightarrow X$ is said to be an {\it enriched Chatterjea type mapping} if there exist $k\in[0,ü\infty)$ and $h\in[0,1)$ such that
$$
\|k(x-y)+Tx-Ty\|\leq h \max\left\{\|(k+1)(x-y)+y-Ty\|,\right.
$$
\begin{equation} \label{eq13}
\left. \|(k+1)(y-x)+x-Tx\|\right\},\forall x,y \in X.
\end{equation}
To indicate the constants involved in \eqref{eq13}, we shall also call  $T$ as a $(k,h$)-{\it enriched Chatterjea type  mapping}. 
\end{defn}

\begin{ex}[ ] \label{ex2a}
\indent

(1) If $k=0$ then by \eqref{eq13}, we obtain the original Chatterjea type contractions. Hence, any Chatterjea type  contraction is a $(0,h$)-enriched Chatterjea type  contraction.

(2) Any $(k,a$)-enriched Chatterjea contraction is a $(k,h$)-enriched Chatterjea type  contraction, with $h=2 a$, in view of the inequality $u+v\leq \max\{u,v\}$. This implies that the nonexpansive map given in Example \ref{ex2} is a $(2(1-a),2a)$-enriched Chatterjea type  contraction, for any $a\in (0,1/2)$. For this mapping, as shown in the previous section, Picard iteration does not converge, in general. 

\end{ex}
Fixed points  of strictly {\it enriched} Chatterjea type  mappings can be approximated by  Krasnoselskij iterative method, as shown by the next theorem.

\begin{theorem}  \label{th2}
Let $(X,\|\cdot\|)$ be a Banach space and $T:X\rightarrow X$ a $(k,h$)-{\it enriched Chatterjea type mapping}. Then

$(i)$ $Fix\,(T)=\{p\}$;

$(ii)$ There exists $\lambda\in (0,1]$ such that the iterative method
$\{x_n\}^\infty_{n=0}$, given by
\begin{equation} \label{eq1.3a}
x_{n+1}=(1-\lambda)x_n+\lambda T x_n,\,n\geq 0,
\end{equation}
converges to p, for any $x_0\in X$;
\end{theorem}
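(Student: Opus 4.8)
The plan is to mimic the proof of Theorem \ref{th1}: pass from $T$ to the averaged map $T_\lambda(x)=(1-\lambda)x+\lambda T(x)$, which satisfies $Fix(T_\lambda)=Fix(T)$, choosing $\lambda=\frac{1}{k+1}\in(0,1)$ when $k>0$ and $\lambda=1$ (so $T_\lambda=T$) when $k=0$. Substituting $k=\frac1\lambda-1$ in \eqref{eq13} and using the algebraic identities
\[
k(x-y)+Tx-Ty=\tfrac1\lambda\bigl(T_\lambda x-T_\lambda y\bigr),\quad
(k+1)(x-y)+y-Ty=\tfrac1\lambda\bigl(x-T_\lambda y\bigr),
\]
together with the symmetric one $(k+1)(y-x)+x-Tx=\tfrac1\lambda(y-T_\lambda x)$, and then cancelling the common factor $\tfrac1\lambda$, the enriched condition \eqref{eq13} becomes
\begin{equation}\label{eqPlan}
\|T_\lambda x-T_\lambda y\|\le h\,\max\bigl\{\|x-T_\lambda y\|,\ \|y-T_\lambda x\|\bigr\},\qquad\forall\,x,y\in X,
\end{equation}
i.e. $T_\lambda$ satisfies the Chatterjea type condition \eqref{eq4a}; moreover the Krasnoselskij iteration \eqref{eq1.3a} is precisely the Picard iteration $x_{n+1}=T_\lambda x_n$ of $T_\lambda$. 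So everything reduces to proving that a selfmap $S:=T_\lambda$ of a Banach space satisfying \eqref{eqPlan} is a Picard operator.

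Here lies the only real difficulty. Since $h$ is allowed to range over all of $[0,1)$, the short two-step telescoping argument of Theorem \ref{th1} breaks down: from \eqref{eqPlan} with $x=x_n$, $y=x_{n-1}$ one merely gets $\|x_{n+1}-x_n\|\le h\,\|x_{n-1}-x_{n+1}\|$, and bounding the right side by the triangle inequality produces the factor $\frac{h}{1-h}$, which is $\ge1$ once $h\ge\frac12$. I would instead invoke \'Ciri\'c's orbit-diameter technique. Fix $x_0$, put $x_{n+1}=Sx_n$ and $\delta_n=\operatorname{diam}\{x_0,x_1,\dots,x_n\}$; since this attained diameter cannot be realised by a pair of indices both $\ge1$ (that would force $\delta_n\le h\delta_n$), it must be realised by a pair containing the index $0$, and applying \eqref{eqPlan} to that pair yields $\delta_n\le\frac{1}{1-h}\,\|x_1-x_0\|$ for every $n$; hence the orbit $\{x_n\}$ is bounded. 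Applying \eqref{eqPlan} to every pair of iterates with indices $\ge n$ then gives $\operatorname{diam}\{x_n,x_{n+1},\dots\}\le h\,\operatorname{diam}\{x_{n-1},x_n,\dots\}\le\dots\le h^{\,n}\,\frac{\|x_1-x_0\|}{1-h}$, so $\|x_{n+m}-x_n\|\to0$ uniformly in $m$ and $\{x_n\}$ is a Cauchy sequence.

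By completeness $x_n\to p$ for some $p\in X$, which already yields $(ii)$. To see that $Sp=p$, estimate $\|p-Sp\|\le\|p-x_{n+1}\|+\|Sx_n-Sp\|\le\|p-x_{n+1}\|+h\max\{\|p-x_{n+1}\|,\|x_n-Sp\|\}$ and let $n\to\infty$ to obtain $\|p-Sp\|\le h\|p-Sp\|$, which forces $Sp=p$; a further fixed point $q$ would satisfy $\|p-q\|=\|Sp-Sq\|\le h\|p-q\|$, so $Fix(S)=\{p\}$, and since $Fix(T)=Fix(T_\lambda)=\{p\}$ this gives $(i)$. (Alternatively, one may note that \eqref{eqPlan} implies \'Ciri\'c's quasi-contraction condition and simply quote the corresponding fixed point theorem.) As indicated, the crux is precisely replacing the elementary estimate of Theorem \ref{th1} by this diameter argument, which is what makes the full range $h\in[0,1)$ admissible.
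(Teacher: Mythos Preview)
Your proof is correct and follows the same route as the paper: reduce to the averaged map $T_\lambda$ with $\lambda=\frac{1}{k+1}$ (or $\lambda=1$ when $k=0$), verify that the enriched condition \eqref{eq13} becomes the Chatterjea type condition $\|T_\lambda x-T_\lambda y\|\le h\max\{\|x-T_\lambda y\|,\|y-T_\lambda x\|\}$, and then finish via \'Ciri\'c's quasi-contraction theorem. The only difference is that the paper simply \emph{cites} \'Ciri\'c \cite{Cir74} at this point, whereas you actually spell out the orbit-diameter argument; your alternative parenthetical remark (quote \'Ciri\'c) is precisely what the paper does. Note also that the paper's displayed inequality \eqref{eq1.5} contains a typographical slip --- it reads $\max\{\|x-T_\lambda x\|,\|y-T_\lambda y\|\}$ where it should read $\max\{\|x-T_\lambda y\|,\|y-T_\lambda x\|\}$ --- and your version \eqref{eqPlan} has the correct right-hand side.
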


\begin{proof}
Like in the proof of Theoem \ref{th1}, for any $\lambda\in (0,1)$, we consider the averaged mapping $T_\lambda$, given by
\begin{equation} \label{eq1.4}
T_\lambda (x)=(1-\lambda)x+\lambda T(x), \forall  x \in X,
\end{equation}
which is known to have the  property $
Fix(\,T_\lambda)=Fix\,(T).
$

If $k> 0$ in \eqref{eq13}, then let us denote  $\lambda=\dfrac{1}{k+1}\in (0,1)$. Thus the contractive condition \eqref{eq13} becomes
$$
\left \|\left(\frac{1}{\lambda}-1\right)(x-y)+Tx-Ty\right\|\leq h \max\left\{\left \|\frac{1}{\lambda}(x-y)+y-Ty\right \|,\right.
$$
$$
\left. \left \|\frac{1}{\lambda}(y-x)+x-Tx\right \|\right\},\forall x,y \in X.
$$
which can be written in an equivalent form as
$$
\left \|(1-\lambda)(x-y)+\lambda(Tx-Ty)\right\|\leq h \max\left\{ \|x-y+\lambda(y-Ty)\|,\right.
$$
$$
\left. \|y-x+\lambda(x-Tx)\|\right\},\forall x,y \in X.
$$
The last inequality expresses the fact that $T_\lambda$ is a Chatterjea type contraction, i.e.,
\begin{equation} \label{eq1.5}
\|T_\lambda x-T_\lambda y\|\leq h  \max\left\{\|x-T_\lambda x\|,\|y-T_\lambda y\|\right\},\forall x,y \in X.
\end{equation}
Now, apply \' Cir\' c fixed point theorem for quasi-contractions (see \cite{Cir74}) to get the conclusion.

The remaining case $k=0$ is also similar to $k\neq 0$ with the only difference that now $\lambda=1$ and hence we shall work with $T=T_1$, when Kasnoselskij iteration \eqref{eq3a} reduces to the simple Picard iteration, We apply directly to $T$ \' Cir\' c fixed point theorem for quasi-contractions (see \cite{Cir74}).
\end{proof}

\section{Conclusions}


\begin{thebibliography}{99}

\bibitem{Ariz} Ariza-Ruiz, D., Jim\' enez-Melado, A. and L\' opez-Acedo, G., {\it A fixed point theorem for weakly Zamfirescu mappings}, Nonlinear Anal. {\bf 74} (2011), no. 5, 1628--1640.

\bibitem{Ban22} Banach, S., {\it Sur les op\' erations dans les ensembles abstraits et leurs applications aux \' equations int\' egrales}, Fund Math. {\bf 3}  (1922), 133--181.

\bibitem{Ban32} Banach, S., \textit{Theorie des Operations Lineaires}. Monografie Matematyczne, Warszawa-Lwow, 1932.

\bibitem{BB07} Berinde, M. and Berinde, V., \textit{On a general class of multi-valued weakly Picard mappings}. J. Math. Anal. Appl. 326 (2007), no. 2, 772--782.

\bibitem {Berinde-CGen} Berinde, V., \emph{Contrac\c{t}ii generalizate \c{s}i aplica\c{t}ii}, Editura Cub Press 22, Baia Mare, 1997.	

\bibitem{Ber03} Berinde, V., \textit{On the approximation of fixed points of weak contractive mappings}. Carpathian J. Math. {\bf 19} (2003), no. 1, 7--22.

\bibitem{Ber03a} Berinde, V., \textit{Approximating fixed points of weak $\phi$-contractions using the Picard iteration}. Fixed Point Theory {\bf 4} (2003), no. 2, 131--142.

\bibitem{Ber04} Berinde, V., \textit{Approximating fixed points of weak contractions using the Picard iteration}, Nonlinear Anal. Forum {\bf 9} (2004), no. 1, 43--53.

\bibitem {Ber04M} Berinde, V., \textit{A common fixed point theorem for nonself mappings}, Miskolc Math. Notes \textbf{5} (2004), No. 2, 137--144.

\bibitem {MB} Berinde, V.,  \textit{Approximation of fixed points of some nonself generalized $\phi$-contractions}. Math. Balkanica (N.S.) 18 (2004), no. 1-2, 85--93

\bibitem{Ber05a} Berinde, V., \textit{A convergence theorem for some mean value fixed point iteration procedures}. Demonstratio Math. 38 (2005), no. 1, 177--184.

\bibitem{Ber07} Berinde, V., \textit{Iterative Approximation of Fixed Points}, Springer, 2007.

\bibitem{Ber07a} Berinde, V., \textit{A convergence theorem for Mann iteration in the class of Zamfirescu operators}. An. Univ. Vest Timi\c s. Ser. Mat.-Inform. 45 (2007), no. 1, 33--41.

\bibitem{Ber08} Berinde, V., \textit{General constructive fixed point theorems for \' Ciri\' c-type almost contractions in metric spaces}. Carpathian J. Math. {\bf 24} (2008), no. 2, 10--19.

\bibitem{Ber09a} Berinde, V., \textit{Approximating common fixed points of noncommuting discontinuous weakly contractive mappings in metric spaces}. Carpathian J. Math. 25 (2009), no. 1, 13--22.

\bibitem{Ber09} Berinde, V., \textit{Some remarks on a fixed point theorem for \' Ciri\' c-type almost contractions}. Carpathian J. Math. 25 (2009), no. 2, 157--162.

\bibitem{Ber10} Berinde, V., \textit{Approximating common fixed points of noncommuting almost contractions in metric spaces}. Fixed Point Theory 11 (2010), no. 2, 179--188.

\bibitem{Ber10a} Berinde, V., \textit{Common fixed points of noncommuting discontinuous weakly contractive mappings in cone metric spaces}. Taiwanese J. Math. 14 (2010), no. 5, 1763--1776.

\bibitem{Ber10b} Berinde, V., \textit{Common fixed points of noncommuting almost contractions in cone metric spaces}. Math. Commun. 15 (2010), no. 1, 229--241.


\bibitem{Ber12} Berinde, V., \textit{Approximating fixed points of implicit almost contractions}. Hacet. J. Math. Stat. 41 (2012), no. 1, 93--102.

\bibitem{Ber19}  Berinde, V., {\it Approximating fixed points of enriched nonexpansive mappings by Krasnoselskij iteration in Hilbert spaces}, Carpathian J. Math. {\bf 35} (2019), no. 3, 277-288. 

\bibitem{Ber19a}  Berinde, V., {\it Approximating fixed points of enriched strictly pseudocontractive operators in Hilbert spaces}  (submitted)


\bibitem{Ber05} Berinde, V., Berinde, M., \textit{On Zamfirescu's fixed point theorem}. Rev. Roumaine Math. Pures Appl. 50 (2005), no. 5-6, 443--453.


\bibitem{BerMR} Berinde, V., M\u aru\c ster, \c St. and Rus, I. A., \textit{An abstract point of view on iterative approximation of fixed points of nonself operators}. J. Nonlinear Convex Anal. {\bf 15} (2014), no. 5, 851--865.


\bibitem{BerP08} Berinde, V. and P\u acurar, M., \textit{Fixed points and continuity of almost contractions}. Fixed Point Theory 9 (2008), no. 1, 23--34.

\bibitem{BerP09} Berinde, V. and P\u acurar, M., \textit{A note on the paper "Remarks on fixed point theorems of Berinde'' [MR2404199]}. Nonlinear Anal. Forum {\bf 14} (2009), 119--124.

\bibitem{BerP13} Berinde, V. and P\u acurar, M., \textit{Fixed point theorems for nonself single-valued almost contractions}. Fixed Point Theory 14 (2013), no. 2, 301--311.

\bibitem{Pac19b} Berinde, V., P\u acurar, M., {\it Approximating fixed points of enriched contractions in Banach spaces} (submitted)

\bibitem{BerP19}  Berinde, V. and P\u acurar, M., {\it Fixed point theorems of Kannan type mappings with applications to split feasibility  and variational inequality problems}  (submitted)

\bibitem{BPet} Berinde, V., Petric, M., {\it Fixed point theorems for cyclic non-self single-valued almost contractions}, Carpathian J. Math. {\bf 31} (2015), no. 3, 289--296.

\bibitem{Bia} Bianchini, R. M. T., {\it Su un problema di S. Reich riguardante la teoria dei punti fissi}, Boll. Un. Mat. Ital. {\bf 5} (1972), 103--108.

\bibitem{Bor} Borcut, M., P\u acurar, M, and Berinde, V., {\it Tripled fixed point theorems for mixed monotone Chatterjea type contractive operators}. J. Comput. Anal. Appl. {\bf 18} (2015), no. 5, 793--802.

\bibitem{Cac} Caccioppoli, R., {\it Un teorema generale sull'esistenza di elementi uniti in una transformazione funzionale}, Rend Accad dei Lincei. {\bf 11}  (1930), 794--799.

\bibitem{Chat} Chatterjea, S. K., {\it Fixed-point theorems}, C. R. Acad. Bulgare Sci. {\bf 25} (1972), 15--18.

\bibitem{Chat76} Chatterjea, S. K. Sur un th\' eor\` eme du point fixe.  Math. Balkanica {\bf 6} (1976), 30--32 (1978).

\bibitem{Cir} \' Ciri\' c, L. B., {\it On a family of contractive maps and fixed points}, Publ. Inst. Math. (Beograd) (N.S.) {\bf 17(31)} (1974), 45--51.

\bibitem {Cir71} \' Ciri\' c, Lj. B., \textit{Generalized contractions and fixed-point theorems}, Publ. l'Inst. Math. (Beograd) 12 (1971), 19--26.

\bibitem {Cir74} \' Ciri\' c, Lj. B., \textit{A generalization of Banach's contraction principle}, Proc. Am. Math. Soc. 45 (1974), 267--273.

\bibitem {Cir99} \' Ciri\' c, Lj. B., \textit{Convergence theorems for a sequence of Ishikawa iteration for nonlinear quasi-contractive mappings}, Indian J. Pure Appl. Appl. Math. 30 (4) (1999), 425--433.

\bibitem {Ciric71} \' Ciri\' c, Lj. B., \textit{Generalized contractions and fixed-point theorems}, Publ. l'Inst. Math. (Beograd) 12 (1971) 19--26.

\bibitem {Cir71a} \' Ciri\' c, Lj. B., \emph{On contraction type mappings}, Math. Balkanica 1 (1971), 52--57.

\bibitem {Ciric93} \' Ciri\' c, Lj. B., \textit{A remark on Rhoades' fixed point theorem for non-self mappings},  Internat. J. Math. Math. Sci. 16 (1993), no. 2, 397--400.

\bibitem {Cir98} \' Ciri\' c, Lj. B., \textit{Quasi contraction non-self mappings on Banach spaces}, Bull. Cl. Sci. Math. Nat. Sci. Math. No. 23 (1998), 25--31.

\bibitem {Ciric03} \' Ciri\' c, Lj. B., Ume, J. S., Khan, M. S. and Pathak, H. K., \textit{On some nonself mappings}, Math. Nachr. 251 (2003), 28--33.

\bibitem {Ciric-Presic}  Ciri\' c, L.B.,  Presi\' c, S.B., \emph{On Presi\' c type generalization of the Banach contraction mapping principle}, Acta Math. Univ. Comenianae, \textbf{76} (2007), No. 2, 143--147.

\bibitem{Ciric11}  \' Ciri\' c, L., Abbas, M., Saadati, R. and Hussain, N., \textit{Common fixed points of almost generalized contractive mappings in ordered metric spaces}. Appl. Math. Comput. 217 (2011), no. 12, 5784--5789.

\bibitem{Con} Connell, E. H., {\it Properties of fixed point spaces}, Proc. Amer. Math. Soc. {\bf 10} (1959) 974--979.

\bibitem{Enj} Enjouji, Y., Nakanishi, M., Suzuki, T., {\it A generalization of Kannan's fixed point theorem}, Fixed Point Theory Appl. {\bf 2009}, Art. ID 192872, 10 pp.

\bibitem{Fish} Fisher, B., {\it A fixed point theorem}, Math. Mag. {\bf 48} (1975), no. 4, 223--225.

\bibitem{Gab} Gabeleh, M. and Shahzad, N., {\it Approximate optimal solutions and generalized contractions in the sense of Chatterjea}. Politehn. Univ. Bucharest Sci. Bull. Ser. A Appl. Math. Phys. {\bf 77} (2015), no. 4, 45--56.

\bibitem{Kan68} Kannan, R, {\it Some results on fixed points}, Bull. Calcutta Math. Soc. {\bf 60}  (1968), 71--76.

\bibitem{Kan69} Kannan, R., {\it Some results on fixed points. II}, Amer. Math. Monthly {\bf 76} (1969), 405--408.

\bibitem{Kik08} Kikkawa, M., Suzuki, T., {\it Some similarity between contractions and Kannan mappings}, Fixed Point Theory Appl. {\bf 2008}, Art. ID 649749, 8 pp.

\bibitem{Kik08a} Kikkawa, M., Suzuki, T., {\it Some similarity between contractions and Kannan mappings. II}, Bull. Kyushu Inst. Technol. Pure Appl. Math. No. 55 (2008), 1--13. 

\bibitem{Kik17} Kikkawa, M., Suzuki, T., {\it Fixed point theorems for \' Ciri\' c type contractions and others in complete metric spaces}. Linear Nonlinear Anal. {\bf 3} (2017), no. 1, 111--120.

\bibitem{Koh} Kohsaka, F. and Suzuki, T., {\it Existence and approximation of fixed points of Chatterjea mappings with Bregman distances}, Linear Nonlinear Anal. {\bf 3} (2017), no. 1, 73--86.

\bibitem{Mal} Malceski, A., Ibrahimi, A. and Malceski, R., {\it Extending Kannan and Chatterjea theorems in $2$-Banach spaces by using sequentialy convergent mappings}.  Mat. Bilten {\bf 40} (2016), no. 1, 29--36.

\bibitem {Mes}  J. Meszaros, {\it A comparison of various definitions of contractive type mappings}, Bull. Calcutta Math. Soc. {\bf 84} (1992), no.2, 167--194.

\bibitem{Mor} Morales, J. R. and Rojas, E. M., {\it Geraghty's approach for Kannan, Chatterjea and Branciari mappings in $b$-metric spaces}. Indian J. Math. {\bf 59} (2017), no. 1, 73--106.

\bibitem{Nak} Nakanishi, M., Suzuki, T., {\it An observation on Kannan mappings}, Cent. Eur. J. Math. {\bf 8} (2010), no. 1, 170--178.

\bibitem{Pac09} P\u acurar, M., {\it Iterative methods for fixed point approximation}, Editura Risoprint, Cluj-Napoca, 2009.

\bibitem{Pac14} P\u acurar, M.,  Berinde, V.,  Borcut, M. and Petric, M., {\it Triple fixed point theorems for mixed monotone Pre\v si\' c-Kannan and Pre\v si\' c-Chatterjea mappings in partially ordered metric spaces}. Creat. Math. Inform. {\bf 23} (2014), no. 2, 223--234.

\bibitem {Pi890} Picard, E., \textit{Memoire sur la th\' eorie des equations aux deriv\' ees partielles et la methode des approximations successives}. J. Math. Pures et Appl., \textbf{6} (1890), 145--210.

\bibitem{Raz} Razani, A. and Parvaneh, V., {\it Some fixed point theorems for weakly $T$-Chatterjea and weakly $T$-Kannan-contractive mappings in complete metric spaces}, Russian Math. (Iz. VUZ) {\bf 57} (2013), no. 3, 38--45.

\bibitem{Rho} Rhoades, B. E., {\it A comparison of various definitions of contractive mappings}, Trans. Amer. Math. Soc. {\bf 226} (1977), 257--290.

\bibitem {Rhoa83} B.E. Rhoades, {\it Contractive definitions revisited}, Contemporary Mathematics {\bf 21} (1983) 189--205.

\bibitem {Rhoa88} B.E. Rhoades, {\it Contractive definitions and continuity}, Contemporary Mathematics {\bf 72} (1988) 233--245.

\bibitem {Rus79} Rus, I.A., {\it Principles and Applications of the Fixed Point Theory} (in Romanian), Editura Dacia, Cluj-Napoca, 1979.

 \bibitem {Rus79a} Rus, I.A., \emph{Metrical Fixed Point Theorems}, Univ. of Cluj-Napoca, 1979.

\bibitem {Rus83} Rus, I.A., \emph{Generalized contractions}, Seminar on Fixed Point Theory {\bf 3} (1983) 1--130.

 \bibitem{Rus01} Rus, I. A., \textit{Generalized Contractions and Applications}, Cluj Univ. Press, Cluj-Napoca, 2001.

\bibitem{Rus03} Rus, I. A., \textit{Picard operators and applications}, Sci. Math. Jpn., \textbf{58} (2003), No. 1, 191--219.

\bibitem{Rus14} Rus, I. A., \textit{Heuristic introduction to weakly Picard operator theory}, Creat. Math. Inform., \textbf{23} (2014), No. 2, 243--252.

\bibitem{Rus13}  Rus, I. A., \textit{Five open problems in fixed point theory in terms of fixed point structures (I): Singlevalued operators}, in Fixed Point Theory and Its Applications, Editors R. Espinola, A. Petru\c sel and S. Prus, Casa C\u ar\c tii de \c Stiin\c t\u a, Cluj-Napoca, 2013, pp. 39-60.

\bibitem{Rus08} Rus, I. A., Petru\c sel, A. and Petru\c sel, G., \textit{Fixed Point Theory}, Cluj Univ. Press, Cluj-Napoca, 2008.

\bibitem{Rus13} Rus, I. A. and \c Serban, M.-A., \textit{Basic problems of the metric fixed point theory and the relevance of a metric fixed point theorem}, Carpathian J. Math., \textbf{29} (2013), No. 2, 239--258.

\bibitem{Shi} Shioji, N., Suzuki, T., Takahashi, W., {\it Contractive mappings, Kannan mappings and metric completeness}, Proc. Amer. Math. Soc. {\bf 126} (1998), no. 10, 3117--3124.

\bibitem{Sub} Subrahmanyam, P. V., {\it Remarks on some fixed-point theorems related to Banach's contraction principle}, J. Mathematical and Physical Sci. {\bf 8} (1974), 445--457; errata, ibid. {\bf 9} (1975), 195.

\bibitem{Suz05} Suzuki, T., {\it Contractive mappings are Kannan mappings, and Kannan mappings are contractive mappings in some sense}, Comment. Math. (Prace Mat.) {\bf 45} (2005), no. 1, 45--58.

\bibitem{Wlo} Wlodarczyk, K., Plebaniak, R., {\it Kannan-type contractions and fixed points in uniform spaces}, Fixed Point Theory Appl. {\bf 2011}, 2011:90, 24 pp.

\bibitem {Zam} Zamfirescu, T., \textit{Fix point theorems in metric spaces} Arch. Math. (Basel), {\bf 23} (1972), 292--298.



\end{thebibliography}
\end{document}